\providecommand{\U}[1]{\protect\rule{.1in}{.1in}}
\newtheorem{theorem}{Theorem}
\newtheorem{corollary}[theorem]{Corollary}
\newtheorem{remark}[theorem]{Remark}
\begin{document}

\title[Rigidity theorem for the sharp Sobolev constant]{An alternative proof of a rigidity theorem for the sharp Sobolev constant}
\author{Stefano Pigola}
\address{Dipartimento di Fisica e Matematica\\
Universit\`a dell'Insubria - Como\\
via Valleggio 11\\
I-22100 Como, Italy.}
\email{stefano.pigola@uninsubria.it}

\author{Giona Veronelli}
\address{Dipartimento di Matematica\\
Universit\`a degli Studi di Milano\\
via Saldini 50\\
I-20133 Milano, Italy.}
\email{giona.veronelli@unimi.it}
\date{\today}

\begin{abstract}
We provide a somewhat geometric proof of a rigidity theorem by M. Ledoux and
C. Xia concerning complete manifolds with non-negative Ricci curvature
supporting an Euclidean-type Sobolev inequality with (almost) best Sobolev constant.
Using the same technique we also generalize Ledoux-Xia result to complete manifolds with
asymptotically non-negative curvature.
\end{abstract}

\maketitle

\section{Introduction}
A \ Riemannian manifold $\left(  M,\left\langle ,\right\rangle \right)  $ of
dimension $\dim M=m>p\geq1$ is said to support an Euclidean-type Sobolev
inequality if there exists a constant $C_{M}>0$ such that, for every $u\in
C_{c}^{\infty}\left(  M\right)  $,%
\begin{equation}
\left(  \int_{M}\left\vert u\right\vert ^{p^{\ast}}d\mathrm{vol}\right)
^{\frac{1}{p^{\ast}}}\leq C_{M}\left(  \int_{M}\left\vert \nabla u\right\vert
^{p}d\mathrm{vol}\right)  ^{\frac{1}{p}},\label{eq_sob}%
\end{equation}
where%
\begin{equation}\nonumber
p^{\ast}=\frac{mp}{m-p}\label{p-star}%
\end{equation}
and $d\mathrm{vol}$ denotes the Riemannian meausure of $M$. Clearly,
(\ref{eq_sob}) implies that there exists a continuous imbedding $W^{1,p}%
\left(  M\right)  \hookrightarrow L^{p^{\ast}}\left(  M\right)  $, and can be
expressed in the equivalent form%
\begin{equation}\nonumber
C_{M}^{-p}\leq\inf_{u\in\Lambda}\int_{M}\left\vert \nabla u\right\vert
^{p}d\mathrm{vol},\label{3'}%
\end{equation}
where%
\begin{equation}\nonumber
\Lambda=\left\{  u\in L^{p^{\ast}}\left(  M\right)  :\left\vert \nabla
u\right\vert \in L^{p}\text{ and}\int_{M}\left\vert u\right\vert ^{p^{\ast}%
}d\mathrm{vol}_{M}=1\right\}  .\label{4}%
\end{equation}

The validity of (\ref{eq_sob}), as well as the best value of the Sobolev
constant $C_{M}$, have intriguing and deep connections with the geometry
of the underlying manifold, many of which are discussed in the
excellent lecture notes \cite{H-NA}. See also \cite{Le-Toulouse}
for a survey in the more abstract perspective of Markov diffusion processes, and \cite{GT-JGA}
for the relevance of (\ref{eq_sob}) in the $L^{p,q}$-cohomology theory. For instance, we note that a complete manifold with non-negative Ricci curvature (but, in fact, a certain amount of negative curvature is allowed) and supporting an Euclidean-type Sobolev inequality is necessarily connected at infinity. This fact can be proved using (non-linear) potential theoretic arguments; see \cite{PRS-Progress}, \cite{PST-topology}.

 It
is  known (see e.g. Proposition 4.2 in \cite{H-NA}) that%
\begin{equation}
C_{M}\geq K(m,p),\label{constants}%
\end{equation}
where $K\left(  m,p\right)  $ is the best constant in the corresponding
Sobolev inequality of $\mathbb{R}^{m}$. It was discovered by M. Ledoux,
\cite{Le-CAG}, that for complete manifolds of non-negative Ricci curvature,
the equality in (\ref{constants}) forces $M$ to be isometric to $\mathbb{R}%
^{m}$. This important rigidity result has been generalized by C. Xia,
\cite{X-Illinois}, by showing that, in case $C_{M}$ is sufficiently close to
$K\left(  m,p\right)  $, then $M$ is diffeomorphic to $\mathbb{R}^{m}$. The first aim of this
note is to provide a simple and somewhat geometric proof of the Ledoux-Xia
rigidity result.\smallskip

\noindent\textbf{Notation. } In what follows, having fixed a reference origin
$o\in M$, we set $r\left(  x\right)  =\rm{dist}_{M}\left(  x,o\right)  $ and we
denote by $B_{t}$ and $\partial B_{t}$ the geodesic ball and sphere of radius
$t>0$ centered at $o$. The corresponding balls and spheres in the $m$%
-dimensional Euclidean space are denoted by $\mathbb{B}_{t}$ and
$\partial\mathbb{B}_{t}$. Finally, the symbols $V\left(  B_{t}\right)  $ and
$A\left(  \partial B_{t}\right)  $ stand, respectively, for the Riemannian
volume of $B_{t}$ and the $\left(  m-1\right)  $-dimensional Hausdorff measure
of $\partial B_{t}$.\medskip

\begin{theorem}
\label{th_1}Let $\left(  M,\left\langle ,\right\rangle \right)  $ be a
complete, $m$-dimensional Riemannian manifold, $m>p>1$. Assume that
$^{M}\operatorname{Ric}\geq0$ and that the Euclidean-type Sobolev inequality
(\ref{eq_sob}) holds on $M$. Then
\begin{equation}
V(\mathbb{B}_{t})\geq V(B_{t})\geq\left(  \frac{K(m,p)}{C_{M}}\right)
^{m}V(\mathbb{B}_{t}). \label{vol_comp}%
\end{equation}
In particular, if $C_{M}$ is sufficiently close to $K\left(  m,p\right)  $
then $M$ is diffeomorphic to $\mathbb{R}^{m}$ and, in case $C_{M}=K(m,p),$ $M$
is isometric to $\mathbb{R}^{m}$.
\end{theorem}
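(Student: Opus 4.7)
The plan is to establish the two inequalities in \eqref{vol_comp} separately and then derive the rigidity and diffeomorphism statements from standard results. The upper bound $V(\mathbb{B}_t)\ge V(B_t)$ is immediate from the Bishop-Gromov volume comparison, since $\operatorname{Ric}\ge 0$. For the nontrivial lower bound I would test \eqref{eq_sob} against suitably rescaled radial Aubin-Talenti extremizers of the Euclidean Sobolev inequality and exploit the Bishop-Gromov monotonicity of the area density to turn an analytic estimate into a bound on the asymptotic volume ratio of $M$ that then propagates to every scale.

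Concretely, let $\phi(r)=(1+r^{p/(p-1)})^{-(m-p)/p}$; the hypothesis $m>p>1$ ensures that
\begin{equation*}
I_1:=m\omega_m\int_0^\infty\phi(r)^{p^*}\,r^{m-1}\,dr,\qquad I_2:=m\omega_m\int_0^\infty|\phi'(r)|^p\,r^{m-1}\,dr
\end{equation*}
are both finite and satisfy the sharp Euclidean identity $I_1^{1/p^*}=K(m,p)\,I_2^{1/p}$. Combined with the coarea formula and $A(\partial B_t)\le m\omega_m t^{m-1}$, this shows $\phi(r(\cdot))\in L^{p^*}(M)$ with distributional gradient in $L^p(M)$, so a density argument extends \eqref{eq_sob} to this function. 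Applying the extension to $u_\lambda(x):=\phi(r(x)/\lambda)$ and performing the change of variable $t=\lambda s$ in both coarea integrals of \eqref{eq_sob} yields
\begin{equation*}
\lambda^{1/p^*}\,J_1(\lambda)^{1/p^*}\le C_M\,\lambda^{(1-p)/p}\,J_2(\lambda)^{1/p},
\end{equation*}
where $J_1(\lambda):=\int_0^\infty\phi(s)^{p^*}A(\partial B_{\lambda s})\,ds$ and $J_2(\lambda):=\int_0^\infty|\phi'(s)|^p A(\partial B_{\lambda s})\,ds$.

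The final step is to let $\lambda\to\infty$. Writing $A(\partial B_t)=m\omega_m t^{m-1}\theta(t)$ with $\theta\in(0,1]$ non-increasing by Bishop-Gromov and setting $\theta_\infty:=\lim_{t\to\infty}\theta(t)$, dominated convergence gives $\lambda^{-(m-1)}J_j(\lambda)\to\theta_\infty I_j$ for $j=1,2$. Because $m/p^*=(m-p)/p$, dividing by $\lambda^{(m-p)/p}$ and passing to the limit reduces the inequality to $\theta_\infty^{1/p^*}I_1^{1/p^*}\le C_M\theta_\infty^{1/p}I_2^{1/p}$, and the Euclidean identity then yields $K(m,p)\le C_M\theta_\infty^{1/p-1/p^*}=C_M\theta_\infty^{1/m}$, i.e. $\theta_\infty\ge(K(m,p)/C_M)^m$. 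The Bishop-Gromov monotonicity of $t\mapsto V(B_t)/V(\mathbb{B}_t)$ upgrades this asymptotic bound to every $t>0$ and gives the lower bound in \eqref{vol_comp}. When $C_M=K(m,p)$ both inequalities in \eqref{vol_comp} collapse to equality, and the rigidity case of Bishop-Gromov identifies $M$ isometrically with $\mathbb{R}^m$; when $C_M$ is merely close to $K(m,p)$ the asymptotic volume ratio exceeds $1-\epsilon(m)$ for the dimensional threshold of Perelman's diffeomorphism stability theorem under $\operatorname{Ric}\ge 0$, so $M\cong\mathbb{R}^m$ topologically. The main technical obstacle will be the justification of \eqref{eq_sob} on the non-compactly supported profile $u_\lambda$ together with the dominated convergence step, so that the geometric factor $\theta_\infty$ cleanly separates from the Euclidean extremal integrals $I_1,I_2$; everything after that is algebraic.
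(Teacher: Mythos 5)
Your strategy is genuinely different from the paper's and, up to one point discussed below, it works. The paper does not let the geometry enter through the area density directly: it uses the Laplacian comparison theorem to show that the transplanted extremizer $\hat\phi_\lambda=\varphi_\lambda(r)$ is a weak supersolution of the Euclidean equation $\Delta_p\phi_\lambda=-K(m,p)^{-p}\phi_\lambda^{p^*-1}$, integrates this against $\hat\phi_\lambda$ via a cutoff argument to obtain $\int_M|\nabla\hat\phi_\lambda|^p\le K(m,p)^{-p}\int_M\hat\phi_\lambda^{p^*}$, combines with \eqref{eq_sob} to get $\int_M\hat\phi_\lambda^{p^*}\ge(K(m,p)/C_M)^m$ for every $\lambda$, and finally converts this one-parameter family of integral inequalities into the pointwise volume bound through a contradiction argument as $\lambda\to\infty$. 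You bypass the differential inequality entirely: writing $A(\partial B_t)=m\omega_m t^{m-1}\theta(t)$ with $\theta$ non-increasing and letting $\lambda\to\infty$ makes both coarea integrals in \eqref{eq_sob} concentrate where $\theta\approx\theta_\infty$, so the asymptotic volume ratio separates cleanly and the monotonicity of $V(B_t)/V(\mathbb{B}_t)$ upgrades $\theta_\infty\ge(K(m,p)/C_M)^m$ to every scale. Your scaling bookkeeping ($m/p^*=(m-p)/p$ and $1/p-1/p^*=1/m$), the integrability of the dominating functions $\phi^{p^*}s^{m-1}$ and $|\phi'|^ps^{m-1}$, and the extension of \eqref{eq_sob} to the non-compactly-supported profiles are all correct for $m>p>1$. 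This route is shorter than the paper's and avoids the $p$-Laplacian machinery, at the price of using the Sobolev inequality only in the limit $\lambda\to\infty$ rather than for each fixed $\lambda$.

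There is one genuine gap: the passage from $\theta_\infty^{1/p^*}I_1^{1/p^*}\le C_M\,\theta_\infty^{1/p}I_2^{1/p}$ to $K(m,p)\le C_M\theta_\infty^{1/m}$ divides by $\theta_\infty^{1/p^*}$, i.e.\ it silently assumes $\theta_\infty>0$. If $\theta_\infty=0$, your limiting inequality degenerates to $0\le 0$ and yields nothing. You must therefore first rule out $\theta_\infty=0$. This is true, but it needs an input you never state: the Euclidean-type Sobolev inequality \eqref{eq_sob} by itself forces $V(B_t)\ge\gamma(m,p,C_M)\,t^m$ for all $t$ (Carron's lemma, cited as \cite{Car} in the paper and invoked there, as \eqref{lower_vol'}, only in the proof of Theorem \ref{th_10}), whence $\theta_\infty\ge\gamma/\omega_m>0$. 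With that one line added your argument closes. A minor point of attribution: for $C_M$ merely close to $K(m,p)$, the conclusion that $\operatorname{Ric}\ge0$ together with almost-Euclidean volume growth implies $M$ is diffeomorphic to $\mathbb{R}^m$ is the Cheeger--Colding theorem \cite{CC-JDG}; Perelman's earlier result in this setting gives contractibility rather than a diffeomorphism.
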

Actually, using the same technique, we shall prove that a lower control on the volume
of geodesic balls from a fixed origin can be obtained even if we allow a certain
amount of negative curvature. More precisely, we will prove the following

\begin{theorem}
\label{th_10}Let $\left(  M,\left\langle ,\right\rangle \right)  $ be a
complete, $m$-dimensional Riemannian manifold, $m\geq 3$, with
\begin{equation}\label{Ric}
{}^M\operatorname{Ric}(y)\geq -(m- 1)G(r(y))\ \textrm{on }M
\end{equation}
for some non-negative function $G\in C^0([0,+\infty))$. Assume  that $G$ satisfies the integrability condition
\begin{equation}\label{int_G}
\int_0^{\infty}tG(t)dt=b<+\infty
\end{equation}
and that the Euclidean-type Sobolev inequality
(\ref{eq_sob}) holds on $M$, for some $1<p<m$. Then
\begin{equation}
e^{mb}V(\mathbb{B}_{t})\geq V(B_{t})\geq \hat C(m,p,C_M,b) V(\mathbb{B}_{t}), \label{vol_comp'}%
\end{equation}
where
\[\hat C(m,p,C_M,b)\to \left(\frac{K(m,p)}{C_M}\right)^m \text{ as }b\to 0.\]
\end{theorem}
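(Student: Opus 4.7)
The plan is to mimic the proof of Theorem~\ref{th_1}, replacing the Euclidean geometric model by the rotationally symmetric model adapted to the Ricci lower bound \eqref{Ric}. Let $g\in C^{2}([0,\infty))$ denote the solution of the Jacobi equation $g''-Gg=0$ with initial conditions $g(0)=0$, $g'(0)=1$. Since $G\geq 0$, the function $g$ is convex with $g'\geq 1$, so in particular $g(t)\geq t$. Multiplying the ODE by $g'$ and using \eqref{int_G} in a Gronwall-style estimate yields the uniform bounds
\[
g'(t)\leq e^{b},\qquad g(t)\leq e^{b}\,t\qquad\text{for every }t\geq 0.
\]
Standard Bishop-Gromov comparison under \eqref{Ric} then shows both that $A(\partial B_{t})\leq A(\partial\widetilde{B}_{t})$ and that $V(B_{t})/V(\widetilde{B}_{t})$ is non-increasing in $t$, where $\widetilde{B}_{t}$ denotes the geodesic ball of radius $t$ in the rotationally symmetric model with warping function $g$.

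The upper bound in \eqref{vol_comp'} is then immediate: integrating the area comparison and using $g(t)\leq e^{b}t$ gives
\[
V(B_{t})\leq V(\widetilde{B}_{t})\leq e^{(m-1)b}V(\mathbb{B}_{t})\leq e^{mb}V(\mathbb{B}_{t}).
\]

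For the lower bound I would reproduce, with minor modifications, the test-function argument of Theorem~\ref{th_1}. That argument tests \eqref{eq_sob} against a family of radial profiles $u_{R}(x)=\phi_{R}(r(x))$ and, via the coarea formula, derives an integral inequality in which the Euclidean monotonicity of $V(B_{t})/t^{m}$ is used to close the estimate and produce $V(B_{R})\geq (K(m,p)/C_{M})^{m}V(\mathbb{B}_{R})$. Here, the analogue is the monotonicity of $V(B_{t})/V(\widetilde{B}_{t})$: combined with $V(\widetilde{B}_{t})\geq V(\mathbb{B}_{t})$ (from $g(t)\geq t$) and $V(\widetilde{B}_{R})\leq e^{(m-1)b}V(\mathbb{B}_{R})$, this gives
\[
V(B_{t})\geq V(B_{R})\,\frac{V(\widetilde{B}_{t})}{V(\widetilde{B}_{R})}\geq e^{-(m-1)b}\,V(B_{R})\,\left(\frac{t}{R}\right)^{\!m},\qquad 0\leq t\leq R.
\]
Substituting this pointwise lower bound into the integral inequality produced by the test-function step introduces only a multiplicative factor of the form $e^{-\alpha(m,p)\,b}$ relative to the Euclidean counterpart, and produces
\[
V(B_{R})\geq \hat C(m,p,C_{M},b)\,V(\mathbb{B}_{R})
\]
with $\hat C=e^{-\alpha(m,p)\,b}(K(m,p)/C_{M})^{m}$, which has the required asymptotics $\hat C\to (K(m,p)/C_{M})^{m}$ as $b\to 0$.

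The main obstacle is ensuring that, in the limit $b\to 0$, the Euclidean constant recovered is \emph{exactly} $K(m,p)/C_{M}$. This forces the test-function step of Theorem~\ref{th_1} to be set up so that the sharp Euclidean Sobolev constant appears explicitly — essentially demanding the use of Aubin-Talenti radial profiles — so that the only deviation from the Euclidean computation comes from replacing $V(\mathbb{B}_{t})$ by $V(\widetilde{B}_{t})$, with remainder controlled by $g(t)/t-1$. The integrability assumption \eqref{int_G} is exactly what makes this remainder $O(b)$ as $b\to 0$ and hence what allows $\hat C$ to converge to the sharp constant.
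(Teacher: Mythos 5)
Your setup coincides with the paper's: the same model warping function ($h''-Gh=0$, $h(0)=0$, $h'(0)=1$), the bounds $h'\leq e^{b}$, $h(t)\leq e^{b}t$, Bishop--Gromov relative to the model for both inequalities in \eqref{vol_comp'}, and Aubin--Talenti radial profiles as test functions. The upper bound $V(B_t)\leq e^{(m-1)b}V(\mathbb{B}_t)$ is fine (the paper states the slightly weaker $e^{mb}$).

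The genuine gap is in your final paragraph, where you assert that the only deviation from the Euclidean computation is ``replacing $V(\mathbb{B}_t)$ by $V(\widetilde B_t)$'' and that this introduces ``only a multiplicative factor $e^{-\alpha(m,p)b}$.'' There is a second, additive deviation that you have not addressed: the function $\varphi_\lambda$ satisfies the \emph{Euclidean} radial Yamabe equation \eqref{rad_yamabe}, while Laplacian comparison on $M$ only gives $\Delta r\leq (m-1)h'(r)/h(r)\leq (m-1)e^{b}/r$. Running the Stokes/test-function computation therefore produces, besides $-K(m,p)^{-p}\int\hat\phi_\lambda^{p^*}$, an extra term of size $(m-1)(e^{b}-1)\int_M \hat\phi_\lambda\,|\varphi'_\lambda(r)|^{p-1}\,r^{-1}\,d\mathrm{vol}$. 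To absorb this into the main term one must bound the ratio
\[
\frac{\int_M (\lambda+r^{\frac{p}{p-1}})^{-(m-1)}\,d\mathrm{vol}}{\lambda\int_M (\lambda+r^{\frac{p}{p-1}})^{-m}\,d\mathrm{vol}}
\]
uniformly in $\lambda$. The numerator is controlled by the volume \emph{upper} bound, but the denominator requires an a priori \emph{lower} bound $V(B_t)\geq\gamma\,V(\mathbb{B}_t)$, which does not follow from anything in your outline; the paper imports it from Carron's theorem that the Euclidean-type Sobolev inequality \eqref{eq_sob} forces such a lower bound with $\gamma=\gamma(m,p,C_M)>0$. Since $\gamma$ is not sharp, it enters only the error term (which carries the factor $e^{b}-1\to 0$), so the limit $\hat C\to (K(m,p)/C_M)^m$ survives. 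Without this ingredient your error estimate is unjustified, and your monotonicity chain $V(B_t)\geq V(B_R)\,V(\widetilde B_t)/V(\widetilde B_R)$ cannot substitute for it: it bounds $V(B_t)$ below by the very quantity $V(B_R)$ you are trying to estimate. Relatedly, the final passage from the integral inequality to the volume bound at every radius needs the $\lambda\to\infty$ limiting/contradiction argument of Theorem \ref{th_1} applied to $v_{M,h}(t)=C\,V(B_t)/V(\widetilde B_t)-1$; your sketch leaves this step implicit.
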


Combining Theorem \ref{th_10} with Theorem 3.1 in \cite{Zhu}, see also \cite{Be}, we immediately deduce the next rigidity result.

\begin{corollary}\label{coro_10}
Given $m\geq 3$, $m>p$, there exist constants $b_0=b_0(m,p)>0$ and
$\varepsilon_0=\varepsilon_0(m,p)>0$ such that, if $M$ is an
$m$-dimensional complete manifold supporting the Sobolev
inequality (\ref{eq_sob}) with $C_M\leq K(m,p)+\varepsilon$ and
such that
\[
{}^M\operatorname{Sect}\geq -G(r)\ \textrm{on }M,
\]
where $G$ satisfies \eqref{int_G} for some $b\leq b_0$, then $M$
is diffeomorphic to $\mathbb{R}^{m}$.
\end{corollary}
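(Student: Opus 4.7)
The plan is to combine the quantitative volume lower bound furnished by Theorem \ref{th_10} with the topological rigidity theorem of Zhu (Theorem 3.1 in \cite{Zhu}; see also Bessa \cite{Be}).

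First I would observe that the sectional curvature hypothesis ${}^M\operatorname{Sect}\geq -G(r)$ immediately implies the Ricci bound \eqref{Ric}, so all the hypotheses of Theorem \ref{th_10} are in force. Applying it yields
\[
V(B_t)\geq \hat{C}(m,p,C_M,b)\,V(\mathbb{B}_t)\qquad \text{for every }t>0.
\]
In view of the asymptotic behaviour $\hat{C}(m,p,C_M,b)\to (K(m,p)/C_M)^m$ as $b\to 0$, together with the near-optimality $C_M\leq K(m,p)+\varepsilon$, the ratio $V(B_t)/V(\mathbb{B}_t)$ can be made to exceed any prescribed constant strictly less than $1$, uniformly in $t$, by taking both $\varepsilon$ and $b$ small.

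Next I would invoke Zhu's theorem, which supplies the complementary topological statement: for a complete $m$-manifold with asymptotically non-negative sectional curvature in the sense of \eqref{int_G}, there exists a threshold $\alpha^{\ast}=\alpha^{\ast}(m,b)<1$ such that any such $M$ satisfying $V(B_t)\geq \alpha^{\ast}(m,b)\,V(\mathbb{B}_t)$ is diffeomorphic to $\mathbb{R}^m$.

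The conclusion then reduces to a calibration of constants. I would fix first $b_0=b_0(m,p)>0$ so that $\alpha^{\ast}(m,b)$ is well defined on $[0,b_0]$, and then choose $\varepsilon_0=\varepsilon_0(m,p)>0$ small enough that $\hat{C}(m,p,C_M,b)>\alpha^{\ast}(m,b)$ whenever $C_M\leq K(m,p)+\varepsilon_0$ and $b\leq b_0$; the explicit asymptotic behaviour of $\hat{C}$ recorded in Theorem \ref{th_10} makes this choice possible. Zhu's theorem then delivers the diffeomorphism to $\mathbb{R}^m$. I do not anticipate any substantive obstacle here: the real work is contained in Theorem \ref{th_10}, while the corollary is essentially the formal composition of the two results.
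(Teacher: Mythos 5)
Your proposal is correct and matches the paper's intended argument exactly: the paper gives no separate proof of the corollary, stating only that it follows "immediately" by combining Theorem \ref{th_10} with Zhu's Theorem 3.1, which is precisely the composition (volume lower bound from the Sobolev inequality, plus Zhu's volume-ratio threshold for diffeomorphism under asymptotically non-negative sectional curvature) that you describe. Your calibration of $b_0$ and $\varepsilon_0$ via the limit $\hat C(m,p,C_M,b)\to (K(m,p)/C_M)^m$ is the right way to make the "immediate" deduction explicit.
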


\section{Proof of the Ledoux-Xia theorem}
Recall  that, in $\mathbb{R}^{m}$, the equality in (\ref{eq_sob}) with the
best constant $C_{\mathbb{R}^{m}}=K(m,p)$, is realized by the (radial)
Bliss-Aubin-Talenti functions $\phi_{\lambda}\left(  x\right)  =\varphi_{\lambda
}(|x|)$ for every $\lambda>0$, where $|x|$ is the Euclidean norm of $x$ and
$\varphi_{\lambda}(t)$ are the real-valued functions defined as
\begin{equation}\nonumber
\varphi_{\lambda}\left(  t\right)  =\frac{\beta\left(  m,p\right)
\lambda^{\frac{m-p}{p^{2}}}}{\left(  \lambda+t^{\frac{p}{p-1}}\right)
^{\frac{m}{p}-1}}.\label{6}%
\end{equation}
If we choose $\beta(m,p)>0$ such that%
\begin{equation}\label{norm_one}
\int_{\mathbb{R}^{m}}\phi_{\lambda}^{p^{\ast}}(x)dx=1
\end{equation}
then%
\begin{equation}\nonumber
K\left(  m,p\right)  ^{-p}=\int_{\mathbb{R}^{m}}\left\vert \varphi_{\lambda
}^{\prime}\left(  \left\vert x\right\vert \right)  \right\vert ^{p}%
dx\label{5}%
\end{equation}
and, by the standard calculus of variations, the extremal functions
$\phi_{\lambda}$ obey the (nonlinear) Yamabe-type equation%
\begin{equation}
^{\mathbb{R}^{m}}\!\Delta_{p}\phi_{\lambda}=-K\left(  m,p\right)  ^{-p}%
\phi_{\lambda}^{p^{\ast}-1},\label{yamabe}%
\end{equation}
where%
\[
\Delta_{p}u=\operatorname{div}\left(  \left\vert \nabla u\right\vert
^{p-2}\nabla u\right)
\]
stands for the $p$-Laplacian of a given function $u$.

Define $\hat{\phi}_{\lambda}:M\rightarrow\mathbb{R}$ as $\hat{\phi}_{\lambda
}(x):=\varphi_{\lambda}(r(x))$. The idea of our proof is simply to apply Karp version of
Stokes theorem, \cite{K}, to the vector field $X_{\lambda}:=\hat{\phi}_{\lambda}\left\vert \nabla\phi_{\lambda}\right\vert
^{p-2}\nabla\hat{\phi}_{\lambda}$, once we have observed that,
by \eqref{yamabe} and the Laplacian comparison theorem, each function $\hat{\phi}_{\lambda}$ on $M$ satisfies%
\[
\Delta_{p}\hat{\phi}_{\lambda}\geq-K\left(  m,p\right)  ^{-p}\hat{\phi}_{\lambda}%
^{p^{\ast}-1}.
\]
This leads directly to inequality (2.2) in \cite{X-Illinois} and the argument can be completed essentially as explained by Xia.
\begin{proof} [Proof of Theorem \ref{th_1}]
We claim that
\begin{equation}
\left\{
\begin{array}
[c]{ll}%
\text{(i) } & \int_{M}\hat{\phi}_{\lambda}^{p^{\ast}}d\mathrm{vol}\leq1\text{, }\\
\text{(ii) } & |\nabla\hat{\phi}_{\lambda}|\in L^{p}\left(  M\right)  \text{,
}.
\end{array}
\right.  \label{integ}%
\end{equation}
Indeed, since $^{M}Ric\geq0$, according to the
Bishop-Gromov comparison theorem, \cite{Ch}, \cite{PRS-Progress}, $A\left(  \partial B_{t}\right)  /A\left(
\partial\mathbb{B}_{t}\right)  $ is a decreasing function of $t>0$ and,
therefore,%
\begin{equation}
A\left(  \partial B_{t}\right)  \leq A\left(  \partial\mathbb{B}_{t}\right)
\text{, }V\left(  B_{t}\right)  \leq V\left(  \mathbb{B}_{t}\right)
.\label{volumes}%
\end{equation}
The validity of (\ref{integ}) follows from the co-area formula. Furthermore, since ${}^M\operatorname{Ric}\geq0$, by Laplacian comparison, \cite{PRS-Progress}, $\Delta r \leq (m-1)/r$ pointwise on $M\setminus \rm{cut}(o)$ and weakly on all of $M$. This means that
\begin{equation}
-\int_M\left\langle \nabla r,\nabla\eta\right\rangle d\mathrm{vol}\leq\int_M\eta\frac{m-1}{r} d\mathrm{vol}\label{laplace}%
\end{equation}
for all $0\leq\eta\in W^{1,2}_{c}(M)$. Let $0\leq\xi\in
C^{\infty}_c(M)$ to be chosen later and apply \eqref{laplace} with
\begin{equation}\label{eta}
\eta=-\left(\xi\hat{\phi}_{\lambda}\right)|\varphi'_{\lambda}(r)|^{p-2}\varphi'_{\lambda}(r),
\end{equation}
thus obtaining
\begin{align}\label{eq_xi}
&\int_M\varphi'_{\lambda}(r)|\varphi'_{\lambda}(r(y))|^{p-2}\left\langle \nabla r,\nabla\left(\xi\hat{\phi}_{\lambda}\right)\right\rangle d\mathrm{vol}\\
&\leq -\int_M
|\varphi'_{\lambda}(r(y))|^{p-2}\left[(p-1)\varphi''_{\lambda}(r)+\frac{(m-1)}{r}\varphi'_{\lambda}(r)\right]\left(\xi\hat{\phi}_{\lambda}\right)d\mathrm{vol}.\nonumber
\end{align}
On the other hand, according to \eqref{yamabe},
\begin{equation}\label{rad_yamabe}
|\varphi'_{\lambda}(t)|^{p-2}\left\{(p-1)\varphi''_{\lambda}(t)+\frac{m-1}{t}\varphi'_{\lambda}(t)\right\}=-K(m,p)^{-p}\varphi_{\lambda}^{p^{\ast}-1}(t)
\end{equation}
for all $t>0$, and inserting into (\ref{eq_xi}) gives
\begin{align}\label{eq_div}
&-\int_M K(m,p)^{-p}\xi\hat{\phi}_{\lambda}^{p^{\ast}}d\mathrm{vol}\\
&\leq -\int_M |\varphi'_{\lambda}(r(y))|^{p-2}\varphi'_{\lambda}(r)\left\langle \nabla r,\hat{\phi}_{\lambda}\nabla\xi+\xi\varphi'_{\lambda}(r)\nabla r \right\rangle d\mathrm{vol}\nonumber\\
&\leq
-\int_M\xi\left|\nabla\hat{\phi}_{\lambda}\right|^pd\mathrm{vol}-\int_M
\hat{\phi}_{\lambda}\varphi'_{\lambda}(r)|\varphi'_{\lambda}(r(y))|^{p-2}\left\langle
\nabla r,\nabla\xi\right\rangle d\mathrm{vol}.\nonumber
\end{align}
Now choose $\xi=\xi_R$ such that $\xi\equiv 1$ on $B_R$,
$\xi\equiv 0$ on $M\setminus B_{2R}$ and $|\nabla\xi|<2/R$. By
Cauchy-Schwarz inequality and volume comparison we get
\begin{align*}
&\left|\int_M \hat{\phi}_{\lambda}\varphi'_{\lambda}(r)|\varphi'_{\lambda}(r(y))|^{p-2}\left\langle
\nabla r,\nabla\xi\right\rangle d\mathrm{vol}\right|\\
&\leq
\frac{2}{R}\int_R^{2R}\varphi_{\lambda}(t)(-\varphi'_{\lambda}(t))^{p-1}A(\partial
\mathbb B_t)dt\to 0\nonumber
\end{align*}
as $R\to\infty$, for every $m>p$. Then, taking the limits as
$R\to\infty$ in \eqref{eq_div} we obtain
\begin{align*}\label{eq_div}
K(m,p)^{-p}\int_M \hat{\phi}_{\lambda}^{p^{\ast}}d\mathrm{vol}\geq
\int_M\left|\nabla\hat{\phi}_{\lambda}\right|^pd\mathrm{vol},
\end{align*}
proving that%
\begin{equation}
\frac{\int_{M}\left\vert \nabla\hat{\phi}_{\lambda}\right\vert
^{p}}{\int _{M}\hat{\phi}_{\lambda}^{p^{\ast}}}\leq K\left(
m,p\right)
^{-p}.\label{12}%
\end{equation}
On the other hand, because of (\ref{integ}), we can use $\hat
{\phi}_{\lambda}$ into (\ref{eq_sob}) and get%
\begin{equation}
\frac{\int_{M}\left\vert \nabla\hat{\phi}_{\lambda}\right\vert ^{p}}{\int
_{M}\hat{\phi}_{\lambda}^{p^{\ast}}}\geq\frac{\int_{M}\left\vert \nabla
\hat{\phi}_{\lambda}\right\vert ^{p}}{\left(  \int_{M}\hat{\phi}_{\lambda
}^{p^{\ast}}\right)  ^{\frac{p}{p^{\ast}}}}\geq C_{M}^{-p}.\label{13}%
\end{equation}
Combining (\ref{12}) and (\ref{13}) we obtain
\begin{equation}\nonumber
1\geq\int_{M}\hat{\phi}_{\lambda}^{p^{\ast}}\geq\left(  \frac{K(m,p)}{C_{M}%
}\right)  ^{m}.\label{estimate}%
\end{equation}
From this latter, using (\ref{norm_one}), the co-area formula and integrating by parts, it follows that
\begin{align}
0 &  \leq\int_{M}\left(  \frac{C_{M}}{K(m,p)}\right)  ^{m}\hat{\phi}_{\lambda
}^{p^{\ast}}d\mathrm{vol}-\int_{\mathbb{R}^{m}}\phi_{\lambda}^{p^{\ast}%
}(x)dx\label{eq_0leq}\\
&  =\int_{0}^{\infty}v_{M}(t)V(\mathbb{B}_{t})\frac{d}{dt}\left(
-\varphi_{\lambda}^{p^{\ast}}(t)\right)  dt\nonumber
\end{align}
where, by Bishop-Gromov, the function%
\[
v_{M}(t):=\left[  \left(  \frac{C_{M}}{K(m,p)}\right)  ^{m}\frac{V(B_{t}%
)}{V(\mathbb{B}_{t})}-1\right]
\]
is non-increasing. In order to prove (\ref{vol_comp}), it is enough to show that
\[\lim_{t\rightarrow\infty
}v_{M}(t)\geq0.\]
By contradiction, suppose there exist positive constants
$\epsilon$ and $T$ such that $v_{M}(t)\leq-\epsilon$ for all $t\geq T$. Define $T_0=\sup\{t<T:v_{M}(t)\geq 0\}$. Then $0<T_0<T$ and
\begin{align}
&  \int_{0}^{\infty}v_{M}(t)V(\mathbb{B}_{t})\frac{d}{dt}\left(
-\varphi_{\lambda}^{p^{\ast}}(t)\right)  dt\label{eq_0T}\\
&  \leq v_{M}(0)\int_{0}^{T_0}V(\mathbb{B}_{t})\frac{d}{dt}\left(
-\varphi_{\lambda}^{p^{\ast}}(t)\right)  dt\nonumber\\
&  -\epsilon\int_{T}^{\infty}V(\mathbb{B}_{t})\frac{d}{dt}\left(
-\varphi_{\lambda}^{p^{\ast}}(t)\right)  dt.\nonumber
\end{align}
Observe that the $1$-parameter family of functions%
\[
V(\mathbb{B}_{t})\frac{d}{dt}\left(  -\varphi_{\lambda}^{p^{\ast}}(t)\right)
=\omega_{m}\frac{mp}{p-1}\beta\left(  m,p\right)  ^{p^{\ast}}\lambda^{\frac
{m}{p}}\frac{t^{\frac{1}{p-1}+m}}{\left(  \lambda+t^{\frac{p}{p-1}}\right)
^{m+1}}%
\]
is decreasing in $\lambda$, provided $\lambda>>1$. Then we can apply the dominated
convergence theorem to deduce
\begin{equation}
\lim_{\lambda\rightarrow+\infty}\int_{0}^{T}V(\mathbb{B}_{t})\frac{d}%
{dt}\left(  -\varphi_{\lambda}^{p^{\ast}}(t)\right)  dt=0.\label{eq_limit1}%
\end{equation}
On the other hand, using the co-area formula once again,%
\[
\int_{0}^{\infty}V(\mathbb{B}_{t})\frac{d}{dt}\left(  -\varphi_{\lambda
}^{p^{\ast}}(t)\right)  dt=\int_{0}^{\infty}A(\partial\mathbb{B}_{t}%
)\varphi_{\lambda}^{p^{\ast}}(t)dt=\int_{\mathbb{R}^{m}}\phi_{\lambda
}^{p^{\ast}}\left(  x\right)  dx=1,
\]
for all $\lambda>0$. Therefore, by (\ref{eq_limit1}),%
\begin{equation}
\lim_{\lambda\rightarrow+\infty}\int_{T}^{\infty}V(\mathbb{B}_{t})\frac{d}%
{dt}\left(  -\varphi_{\lambda}^{p^{\ast}}(t)\right)  dt=1.\label{eq_limit2}%
\end{equation}
Using (\ref{eq_limit1}) and (\ref{eq_limit2}) into (\ref{eq_0T}) we conclude
that, up to choosing $\lambda>0$ large enough,%
\[
\int_{0}^{\infty}v_{M}(t)V(\mathbb{B}_{t})\frac{d}{dt}\left(  -\varphi
_{\lambda}^{p^{\ast}}(t)\right)  dt<0,
\]
which contradicts (\ref{eq_0leq}). We have thus proven the validity of
(\ref{vol_comp}).

Now, if $C_{M}$ is sufficiently close to $K\left(  m,p\right)  $ an
application of a result by J. Cheeger and T. Colding, \cite{CC-JDG}, yields that $M$ is
diffeomorphic to $\mathbb{R}^{m}$. On the other hand, if $C_{M}=K(m,p)$,
(\ref{vol_comp}) gives $\operatorname{Vol}(\mathbb{B}_{t})=\operatorname{Vol}%
(B_{t})$ for all $t>0$, and we conclude that $M$ is isometric to
$\mathbb{R}^{m}$ by the equality case in the Bishop-Gromov comparison theorem.
\end{proof}
\begin{remark}
\rm{
It should be noted that, in order to prove the isometry with $\mathbb{R}^{m}$, we can merely use the
existence of a single minimizing function $\varphi_{\lambda}$. Indeed, assume $C_M=K(m,p)$. Since $v_{M}(t)\geq0$ and the selected function $-\varphi_{\lambda}^{p^{\ast}}(t)$ is increasing, from (\ref{eq_0leq}) we immediately deduce that $v_{M}(t)$ vanishes identically. The presence of an entire family  $\varphi_{\lambda}$ of minimizers well behaving with respect to the parameter $\lambda$ is actually needed to reach the general volume estimate and the consequent diffeomorphism with $\mathbb{R}^{m}$. On the other hand, as Ledoux pointed out, the original argument is oriented towards the conjecture that a sharp Euclidean Sobolev inequality, without any curvature assumption, implies a sharp Euclidean volume lower bound.
}
\end{remark}

\section{The case of manifolds with asymptotically non-negative curvature}
This section is devoted to a proof of Theorem \ref{th_10}. To this end, we follow exactly the strategy we used to prove Theorem \ref{th_1}.
Clearly, this time we have to take into account the (small) perturbations of (\ref{yamabe}) introduced by the negative curvature.

\begin{proof} [Proof of Theorem \ref{th_1}]
Let $h\in C^2([0,+\infty))$ be the solution of the
problem
\[
\begin{cases}
&h''(t)-G(t)h(t)=0,\\
&h(0) = 0,\  h'(0) = 1,
\end{cases}
\]
and consider the $m$-dimensional model manifold $M_h$ defined as
$M_h:=\left(\mathbb R\times\mathbb
S^{m-1},ds^2+h^2(s)d\theta^2\right)$, where $d\theta^2$ is the
standard metric on $\mathbb S^{m-1}$. We shall use an index '$h$' to
denote objects and quantities referred to $M_h$. Thus, we denote
by $\mathbb{B}^h_{t}$ and $\partial\mathbb{B}^h_{t}$ the geodesic
ball and sphere of radius $t>0$ in $M_h$. Moreover we introduce
the family of functions
$\phi_{\lambda,h}:M_h\rightarrow\mathbb{R}$ defined by
$\phi_{\lambda,h }((s,\theta)):=\varphi_{\lambda}(s)$.
For later purposes, we recall that, \cite {Zhu}, \cite{PRS-Progress},
\begin{equation}\label{star'}
\mathrm{V}(  \mathbb{B}_{t}^{h})  \geq\mathrm{V}\left(
\mathbb{B}_{t}\right)  \text{, }t\geq0.
\end{equation}
Furthermore, we observe that, according to the Bishop-Gromov comparison theorem and its generalizations,
\cite{Ch}, \cite{PRS-Progress}, $A\left(\partial B_{t}\right)
/A\left(\partial \mathbb{B}^h_{t}\right)  $ is a decreasing
function of $t>0$ and the following relations hold
\begin{align}\label{volumes'}%
&A\left(  \partial B_{t}\right)  \leq A(  \partial\mathbb{B}^h_{t}) \leq e^{b(m-1)}A\left(  \partial\mathbb{B}_{t}\right),\\
&V\left(  B_{t}\right)  \leq V(  \mathbb{B}^h_{t})\leq e^{bm}V\left(  \mathbb{B}_{t}\right)\nonumber.
\end{align}
By the co-area formula, these imply
\begin{equation}
\left\{
\begin{array}
[c]{ll}%
\text{(i) } & \int_{M}\hat{\phi}_{\lambda}^{p^{\ast}}d\mathrm{vol}\leq \int_{M_h}\phi_{\lambda,h}^{p^{\ast}}d\mathrm{vol}_h \leq e^{b(m-1)}\text{, }\\
\text{(ii) } & |\nabla\hat{\phi}_{\lambda}|\in L^{p}\left(  M\right),\\
\text{(iii) } & r(x)^{-1}\widehat{\phi}_{\lambda}|\nabla\widehat{\phi}_{\lambda}%
|^{p-1}\in L^1(M)\\
\text{(iv)} & \int_{B_{R}}\widehat{\phi}_{\lambda}|\nabla\widehat{\phi}_{\lambda}%
|^{p-1}d\mathrm{vol}=o\left(  R\right)  \text{, as } R\rightarrow+\infty.

\end{array}
\right.  \label{integ'}%
\end{equation}
Here $d\mathrm{vol}_h$ stands for the Riemannian measure on $M_h$.
We need also to recall from \cite{Car} that the validity of (\ref{eq_sob})
implies that there exists a (small)
constant $\gamma=\gamma(m,p,C_M)>0$ (depending continuously on $C_M$) such that
\begin{equation}\label{lower_vol'}
V(B_t)\geq \gamma V(\mathbb B_t).
\end{equation}

Now, by Laplacian comparison, assumption \eqref{Ric} yields
\[\Delta r \leq \frac{(m-1)e^b}{r}
\]
pointwise on $M\setminus \rm{cut}(o)$ and weakly on all of $M$. This means that
\begin{equation}
-\int_M\left\langle \nabla r,\nabla\eta\right\rangle d\mathrm{vol}\leq\int_M\eta\frac{(m-1)e^b}{r} d\mathrm{vol},\label{laplace'}%
\end{equation}
for all $0\leq\eta\in W^{1,2}_{c}(M)$. Let $0\leq\xi\in
C^{\infty}_c(M)$ to be chosen later and apply \eqref{laplace'} with $\eta$ defined in (\ref{eta})
thus obtaining
\begin{align}\label{eq_xi'}
&\int_M\varphi'_{\lambda}(r)|\varphi'_{\lambda}(r)|^{p-2}\left\langle \nabla r,\nabla\left(\xi\hat{\phi}_{\lambda}\right)\right\rangle d\mathrm{vol}\\
&\leq -\int_M
|\varphi'_{\lambda}(r)|^{p-2}\left[(p-1)\varphi''_{\lambda}(r)+\frac{(m-1)e^b}{r}\varphi'_{\lambda}(r)\right]\left(\xi\hat{\phi}_{\lambda}\right)d\mathrm{vol}.\nonumber
\end{align}
Whence, inserting (\ref{rad_yamabe}) gives
\begin{align}\label{eq_div'}
&-\int_M K(m,p)^{-p}\xi\hat{\phi}_{\lambda}^{p^{\ast}}d\mathrm{vol}+\int_M
\hat{\phi}_{\lambda}\varphi'_{\lambda}(r)|\varphi'_{\lambda}(r)|^{p-2}\frac{(m-1)(e^b-1)}{r}\xi d\mathrm{vol}\\
&\leq\nonumber
-\int_M\xi\left|\nabla\hat{\phi}_{\lambda}\right|^pd\mathrm{vol}-\int_M
\hat{\phi}_{\lambda}\varphi'_{\lambda}(r)|\varphi'_{\lambda}(r)|^{p-2}\left\langle
\nabla r,\nabla\xi\right\rangle d\mathrm{vol}\\\nonumber
&\leq -\int_M\xi\left|\nabla\hat{\phi}_{\lambda}\right|^pd\mathrm{vol}-\int_M
\hat{\phi}_{\lambda}\varphi'_{\lambda}(r)|\varphi'_{\lambda}(r)|^{p-2}|\nabla\xi| d\mathrm{vol}.
\nonumber
\end{align}
Now choose $\xi=\xi_R$ such that $\xi\equiv 1$ on $B_R$,
$\xi\equiv 0$ on $M\setminus B_{2R}$ and $|\nabla\xi|<2/R$. Then, taking the limits as
$R\to+\infty$ in \eqref{eq_div'} and recalling (\ref{integ'}) we obtain
\begin{align*}
&\int_M\left|\nabla\hat{\phi}_{\lambda}\right|^pd\mathrm{vol}-K(m,p)^{-p}\int_M \hat{\phi}_{\lambda}^{p^{\ast}}d\mathrm{vol}\\
&\leq
(m-1)\left(\tfrac{m-p}{p-1}\right)^{p-1}\beta^p(e^b-1)\lambda^{\frac{m-p}{p}}\int_M (\lambda+ r^{\frac{p}{p-1}})^{-(m-1)}d\mathrm{vol},
\end{align*}
proving that%
\begin{equation}
\frac{\int_{M}\left\vert \nabla\hat{\phi}_{\lambda}\right\vert
^{p}}{\int _{M}\hat{\phi}_{\lambda}^{p^{\ast}}}\leq K\left(
m,p\right)
^{-p}+ C_1\label{12'},%
\end{equation}
where we have set
\begin{equation}\label{c_1'}
C_1(m,p,\lambda,b):=(m-1)\left(\tfrac{m-p}{p-1}\right)^{p-1}\beta^{-\tfrac{p^2}{m-p}}(e^b-1)\frac{\int_M (\lambda+ r^{\frac{p}{p-1}})^{-(m-1)}d\mathrm{vol}}{\lambda\int_M (\lambda+ r^{\frac{p}{p-1}})^{-m}d\mathrm{vol}}.
\end{equation}
By \eqref{volumes'} and computing explicitly the integrals on
$\mathbb R^m$, we get
\begin{align*}
\int_M (\lambda+ r^{\frac{p}{p-1}})^{-(m-1)}d\mathrm{vol}
&\leq\int_0^{\infty}\frac{A(\partial \mathbb B_t) e^{b(m-1)}}{(\lambda+t^{\frac{p}{p-1}})^{m-1}}\\
&=e^{b(m-1)}A(\partial \mathbb B_1) \lambda^{-\frac{m-p}{p}}\frac{\Gamma(m-\frac{m}{p})\Gamma(\frac{m}{p}-1)}{\frac{p}{p-1}\Gamma(m-1)},
\end{align*}
where $\Gamma$ denotes the Euler Gamma function. On the other hand,
\begin{equation}\label{bord'}
\frac{V(B_t)}{\left(\lambda+t^{\frac{p}{p-1}}\right)^m}\leq \frac{V( \mathbb{B}_t^h)}{\left(\lambda+t^{\frac{p}{p-1}}\right)^m}\leq e^{bm}\frac{V(\mathbb{ B}_t)}{\left(\lambda+t^{\frac{p}{p-1}}\right)^m}\to 0,
\end{equation}
as $t\to\infty$.
Therefore, we can integrate by parts using the co-area formula, apply \eqref{lower_vol'}, integrate by parts again and compute explicitly the integrals on
$\mathbb R^m$, thus obtaining
\begin{align*}
\int_M (\lambda+ r^{\frac{p}{p-1}})^{-m}d\mathrm{vol}
&=\int_0^{\infty} V(B_t)\left(-\frac{d}{dt}(\lambda+ t^{\frac{p}{p-1}})^{-m}\right)dt\\
&\geq \gamma \int_0^{\infty} V(\mathbb B_t)\left(-\frac{d}{dt}(\lambda+ t^{\frac{p}{p-1}})^{-m}\right)dt\\
&=\gamma\int_0^{\infty} A(\partial \mathbb B_t)\frac{d}{dt}(\lambda+ t^{\frac{p}{p-1}})^{-m}dt\\
&=\gamma A(\partial \mathbb B_1) \lambda^{-\frac{m}{p}}\frac{\Gamma(m-\frac{m}{p})\Gamma(\frac{m}{p})}{\frac{p}{p-1}\Gamma(m)}.
\end{align*}
Inserting into (\ref{c_1'}) and (\ref{12'}), it follows that
\begin{equation}
\frac{\int_{M}\left\vert \nabla\hat{\phi}_{\lambda}\right\vert
^{p}}{\int _{M}\hat{\phi}_{\lambda}^{p^{\ast}}}\leq K\left(
m,p\right)
^{-p}+ C_2\label{12a'},%
\end{equation}
where
\[
C_2(m,p,b,C_M)
:= \tfrac{(m-1)^2p}{m-p}\left(\tfrac{m-p}{p-1}\right)^{p-1}\beta^{-\tfrac{p^2}{m-p}}(e^b-1)\frac{e^{b(m-1)}}{\gamma}.
\]
On the other hand, because of (\ref{integ'}), we can use $\hat
{\phi}_{\lambda}$ into (\ref{eq_sob}) and get%
\begin{equation}
\frac{\int_{M}\left\vert \nabla
\hat{\phi}_{\lambda}\right\vert ^{p}}{\left(  \int_{M}\hat{\phi}_{\lambda
}^{p^{\ast}}\right)  ^{\frac{p}{p^{\ast}}}}\geq C_{M}^{-p}.\label{13'}%
\end{equation}
Combining (\ref{12a'}) and (\ref{13'}) we obtain
\begin{equation}\nonumber
\int_{M}\hat{\phi}_{\lambda}^{p^{\ast}}\geq C_3^{-1},\label{estimate'}%
\end{equation}
with
\begin{equation}\nonumber
C_3(m,p,b,C_M):=\left[\left(  \frac{C_{M}%
}{K(m,p)}\right)  ^{p}+C_M^pC_2\right]^{m/p}
\end{equation}
From this latter, using (\ref{integ'}), (\ref{bord'}),  the co-area formula and integrating by parts, it follows that
\begin{align}
0 &  \leq C_3e^{b(m-1)}\int_{M} \hat{\phi}_{\lambda
}^{p^{\ast}}d\mathrm{vol}-\int_{M_h}\phi_{\lambda,h}^{p^{\ast}}d\mathrm{vol}_h\label{eq_0leq'}\\
&=\int_{0}^{\infty}v_{M,h}(t)V(\mathbb{B}^h_{t})\frac{d}{dt}\left(
-\varphi_{\lambda}^{p^{\ast}}(t)\right)  dt\nonumber
\end{align}
where, by Bishop-Gromov, the function%
\[
v_{M,h}(t):=\left[  C_3e^{b(m-1)}\frac{V(B_{t}%
)}{V(\mathbb{B}^h_{t})}-1\right]
\]
is non-increasing. In view of (\ref{star'}), in order to prove (\ref{vol_comp'}), it's enough
to show that $\lim_{t\rightarrow\infty }v_{M,h}(t)\geq0$. By
contradiction, suppose there exist positive constants $\epsilon$
and $T$ such that $v_{M,h}(t)\leq-\epsilon$ for all $t\geq T$. In
this assumption, $T_0:=\sup\{t<T:v_{M,h}(t)\geq0\}$ is well
defined and $0<T_0<T$. Then
\begin{align}
&
\int_{0}^{\infty}v_{M,h}(t)V(\mathbb{B}^h_{t})\frac{d}{dt}\left(
-\varphi_{\lambda}^{p^{\ast}}(t)\right)  dt\label{eq_0T'}\\
&  \leq
v_{M,h}(0)\int_{0}^{T_0}V(\mathbb{B}^h_{t})\frac{d}{dt}\left(
-\varphi_{\lambda}^{p^{\ast}}(t)\right)  dt\nonumber\\
& -\epsilon\int_{T}^{\infty}V(\mathbb{B}^h_{t})\frac{d}{dt}\left(
-\varphi_{\lambda}^{p^{\ast}}(t)\right)  dt.\nonumber
\end{align}
Observe that
\begin{align}
&\lim_{\lambda\rightarrow+\infty}\int_{0}^{T_0}V(\mathbb{B}^h_{t})\frac{d}%
{dt}\left(  -\varphi_{\lambda}^{p^{\ast}}(t)\right)
dt\label{eq_limit1'}
\\&\leq e^{bm}
\lim_{\lambda\rightarrow+\infty}\int_{0}^{T_0}V(\mathbb{B}_{t})\frac{d}%
{dt}\left(  -\varphi_{\lambda}^{p^{\ast}}(t)\right)
dt=0.\nonumber%
\end{align}
On the other hand, using \eqref{star'} and the co-area
formula, we have
\begin{align*}
\int_{0}^{\infty}V(\mathbb{B}^h_{t})\frac{d}{dt}\left(
-\varphi_{\lambda
}^{p^{\ast}}(t)\right)  dt
&\geq\int_{0}^{\infty}V(\mathbb{B}_{t})\frac{d}{dt}\left(
-\varphi_{\lambda
}^{p^{\ast}}(t)\right)  dt\\
&\geq\int_{0}^{\infty}A(\partial\mathbb{B}_{t}%
)\varphi_{\lambda}^{p^{\ast}}(t)dt\\
&=\int_{\mathbb{R}^{m}}\phi_{\lambda
}^{p^{\ast}}\left(  x\right)  dx
=1,
\end{align*}
for all $\lambda>0$. Therefore, by (\ref{eq_limit1'}),%
\begin{equation}
\lim_{\lambda\rightarrow+\infty}\int_{T}^{\infty}V(\mathbb{B}^h_{t})\frac{d}%
{dt}\left(  -\varphi_{\lambda}^{p^{\ast}}(t)\right)  dt
\geq
1.\label{eq_limit2'}%
\end{equation}
Inserting (\ref{eq_limit1'}) and (\ref{eq_limit2'}) into (\ref{eq_0T'}) we conclude
that, up to choosing $\lambda>0$ large enough,%
\[
\int_{0}^{\infty}v_{M,h}(t)V(\mathbb{B}^h_{t})\frac{d}{dt}\left(
-\varphi _{\lambda}^{p^{\ast}}(t)\right)  dt<0,
\]
which contradicts (\ref{eq_0leq'}). Setting $\hat
C(m,p,C_M,b):=C_3^{-1}e^{-b(m-1)}$, we have thus proven the
validity of (\ref{vol_comp'}).
\end{proof}

\end{document}